\newtheorem{theorem}{Theorem}
\newtheorem{proposition}{Proposition}
\newtheorem{lemma}{Lemma}
\theoremstyle{definition}
\newtheorem{remark}{Remark}
\newcommand{\N}{\mathbb{N}}
\newcommand{\Z}{\mathbb{Z}}
\newcommand{\Pp}{\mathbb{P}}
\title{Anisotropic non-oriented bond percolation in high dimensions}
\author{Pablo A. Gomes\footnote{Universidade de São Paulo, Brasil. pagomes@usp.br} \and Alan Pereira\footnote{Universidade Federal de Alagoas, Brasil. alan.pereira@im.ufal.br} \and Remy Sanchis\footnote{Universidade Federal de Minas Gerais, Brasil. rsanchis@mat.ufmg.br}}
\date{}
\begin{document}

\maketitle

\begin{abstract}
    We consider inhomogeneous non-oriented Bernoulli bond percolation on $\Z^d$, where each edge has a parameter depending on its direction. We prove that, under certain conditions, if the sum of the parameters is strictly greater than 1/2, we have percolation in sufficiently high dimensions. The main tool is a dynamical coupling between  models for different dimensions with different sets of parameters.
    \medskip
    
\noindent \textbf{Keywords:} {anisotropic percolation; bond percolation; high dimensional systems; coupling; phase diagram.} 
\smallskip

\noindent \textbf{AMS-subject:}{ 60K35; 82B43} 
\end{abstract}

\section{Introduction}

The theory of bond percolation on $\Z^d$ originated in \cite{BH}. In this model, each edge is declared open, independently of the others, with probability $p$, and closed otherwise. The primary question is if there is an infinite open connected component with positive probability for a given value of $p$, in which case we say that  {\it percolation} occurs.  The existence of a non-trivial phase transition for $d\geq 2$ was established in this seminal article with a {\it critical probability} $p_c(\Z^d)\in (0,1)$, such that percolation occurs for $p>p_c(\Z^d)$ and does not occur for $p<p_c(\Z^d)$.

It is well known that for $d=2$, $p_c(\Z^d) = 1/2$ (see~\cite{K3}). Although the precise value of $p_c(\Z^d)$ is unknown when $d \geq 3$, the asymptotic behavior $2dp_c(\Z^d)=1+o(1/d)$ was obtained independently in \cite{K} and \cite{G}.
Since $\Z^d$ is locally a $(2d)$-ary tree, the model can be locally seen as a Galton-Watson process with i.i.d.~branches, and 
one could say that the critical probabilities of the two models are  
asymptotically equal as $d\rightarrow \infty$.

In this paper we will consider inhomogeneous non-oriented Bernoulli  percolation, also known as {\it anisotropic percolation}, where edges in each direction have distinct parameters. The existence of a non-trivial critical hypersurface is well established and some of its features are known; it is continuous and strictly monotonic in each parameter (see \cite{CLS}) and its behavior near the boundary (i.e.~when one or more parameters are zero) is related to the so-called {\it dimensional crossover} phenomenon (see for instance \cite{GSS}). For $d=2$, the phase-diagram was completely described by Kesten (see page 54 of~\cite{K2}) who proved that percolation occurs if and only if the sum of the two parameters is greater than one or at least one parameter equals one. For $d\geq 3$, a precise determination of  the critical hypersurface seems hopeless so we will focus on how its asymptotic behavior is close to the inhomogeneous Galton-Watson process on the $(2d)-$ary tree.   
We will prove that if the mean number of open edges incident to the origin (i.e. twice the sum of the parameters) is greater than one, percolation occurs under some regularity conditions on the parameters.  Observe that on the inhomogeneous Galton-Watson model, any regularity condition is unnecessary.  Moreover, we give a sufficient condition on the sum of the parameters to guarantee that percolation occurs. The main tool we  use is a monotonic coupling between anisotropic percolation in different dimensions with distinct sets of parameters.

The remainder of the text is organized as follows. In Section \ref{sec:model} we define more precisely the model and state our results. In Section \ref{sec:couplings} we establish the dynamical coupling, and in Section \ref{sec:proofs} we prove the theorems.

\section{The model and main results}\label{sec:model}

We now briefly define the model. Let $\{e_1, \dots, e_d\}$ be the set of canonical vectors of $\Z^d$. For each $i=1, \dots, d$, let $E_i = \{ \langle x, x \pm e_i \rangle: x \in \Z^d\}$ be the set of edges parallel to $e_i$. We denote the edge set by $E(\Z^d):= \cup_i E_i$.

Given  $p_1, \dots, p_d \in [0,1]$, consider a family of independent random variables $\{X_e\}_{e \in E(\Z^d)}$, where, for each $e \in E_i$, $X_e$ has a Bernoulli($p_i$) distribution, $i=1, \dots, n$. Let $\mu_e$ be the law of $X_e$ and  $\Pp = \prod_{e \in E(\Z^d)} \mu_e$ the resulting product measure. We declare an edge $e$ to be {\it open} if $X_e=1$ and {\it closed} otherwise. The model is said to be {\it homogeneous} whenever all the parameters $p_i$ are equal, and {\it inhomogeneous} otherwise.

We denote by $\{x \leftrightarrow y\}$ the event where  $x, y \in \Z^d$ are connected by an open path, i.e., there exist $x_0, \dots, x_n$ such that $x_0 = x$, $x_n =y$ and each $\langle x_{j-1}, x_j \rangle$ belongs to $ E(\Z^d)$ and is open for $j=1, \dots, n$. Let  $\mathcal{C}^d_0 = \{ x \in \Z^d:  0 \leftrightarrow x\}$ be the open cluster of the origin, and
 $|\mathcal{C}^d_0|$ its size.  We define 
\begin{equation*}
    \theta_d(p_1, \dots, p_d) := \Pp(|\mathcal{C}^d_0|= \infty).
\end{equation*}

In what follows, $p_c(\Z^d)= \displaystyle \sup\{p \geq 0 :\theta_d(p,\dots, p)=0\}$  denotes the critical probability for the non-oriented homogeneous model.

Our main result is the following:
\bigskip
\begin{theorem} \label{theo:anisotropic}
Consider inhomogeneous non-oriented Bernoulli bond percolation on $\Z^d$ with parameters $p_1,\dots, p_d \in [0,1)$. There exists a constant $C>0$, independent of the dimension $d \geq 2$, such that, if the following conditions are satisfied 

\quad {\bf C1)} $\delta=\delta(p_1,\dots,p_d):=p_1+\cdots +p_d-1/2 > 0$, and

\quad {\bf C2)} $\displaystyle \max_{1 \leq i \leq d} p_i \leq C \delta^2$,

then $\theta_d(p_1, \dots, p_d)>0$.
\end{theorem}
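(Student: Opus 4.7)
The strategy I would pursue is to use the dynamical coupling from Section~\ref{sec:couplings} to reduce, one dimension at a time, the anisotropic model on $\Z^d$ to a two-dimensional anisotropic model on $\Z^2$, where Kesten's classical planar result ($q_1 + q_2 > 1 \Rightarrow$ percolation) closes the argument. The starting heuristic is that $\sum_i p_i > 1/2$ is precisely the mean-offspring threshold for the associated inhomogeneous Galton--Watson process, while the planar threshold sits at $1$; the factor of two between them must be paid by the extra connectivity that the $d-2$ ``non-planar'' directions provide, and this is exactly what the coupling should recover.

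\emph{Step 1 (dimensional reduction).} I would apply the coupling of Section~\ref{sec:couplings} to compare $(\Z^d,(p_1,\ldots,p_d))$ with a model on $\Z^{d-1}$ with parameters $(p_1',\ldots,p_{d-1}')$ in which each $p_i'$ is enhanced over $p_i$ by short detours going through the $e_d$-direction. Monotonicity of the coupling gives $\theta_{d-1}(p_1',\ldots,p_{d-1}')>0 \Rightarrow \theta_d(p_1,\ldots,p_d)>0$.

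\emph{Step 2 (tracking the parameter sum).} Iterate Step~1 a total of $d-2$ times to reach a planar model with parameters $(q_1,q_2)$. Write $\Sigma_k$ for the sum of parameters after reduction to dimension $k$. The key quantitative claim is a one-step inequality of the form
\[
\Sigma_{k-1} \;\geq\; \Sigma_k + p_k^{(k)} - R_k, \qquad R_k \leq c\,\bigl(\max_i p_i^{(k)}\bigr)^2,
\]
i.e.\ each reduction essentially recovers the whole parameter of the removed direction. Summing telescopically over $k=d,\ldots,3$ gives $\Sigma_2 \geq 2\Sigma_d - p_1 - p_2 - \sum_k R_k = 1 + 2\delta - p_1 - p_2 - \sum_k R_k$. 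Under condition C2, both $p_1+p_2$ and the cumulative error $\sum_k R_k$ are of order $\delta^2$, which is much smaller than $\delta$, so $\Sigma_2 > 1$.

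\emph{Step 3 (Kesten and propagation back).} Once $q_1 + q_2 > 1$ we apply Kesten's 2D result to conclude $\theta_2(q_1,q_2)>0$; pushing this back through the $d-2$ monotone couplings yields $\theta_d(p_1,\ldots,p_d)>0$.

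\emph{Main obstacle.} The hardest point is the quantitative increment in Step~2: one must show that the coupling really does recover (to leading order) the full weight $p_k^{(k)}$ of the removed direction, and that the higher-order loss term $R_k$ is quadratic in the individual parameters rather than merely linear. The smallness condition $\max_i p_i \leq C\delta^2$ is calibrated exactly so that these quadratic losses, summed over $d-2$ iterations, stay well below the initial excess $\delta$; any weaker hypothesis would risk letting the accumulated error consume the slack. A secondary technical point is to verify that the enhanced parameters remain strictly below $1$ throughout the iteration and that all estimates are uniform in $d$, which is what allows the constant $C$ in the statement to be dimension-free.
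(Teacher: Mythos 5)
Your plan has the right ingredients (the coupling lemma, Kesten's planar result, Hara--Slade asymptotics) but the central quantitative step is wrong, and with it the entire reduction.

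The coupling of Section~\ref{sec:couplings} (Proposition~\ref{prop:coupling}) merges two directions $e_d, e_{d+1}$ into a single effective direction with parameter $\tilde p_d = p_d + p_{d+1} - p_d p_{d+1}$, leaving $p_1,\dots,p_{d-1}$ untouched; equivalently the $q$-weights add, $q(\tilde p_d) = q(p_d)+q(p_{d+1})$. Consequently, one step of reduction changes the parameter sum by
\[
\Sigma_{k-1} = \Sigma_k - p_{k-1}^{(k)}\,p_k^{(k)},
\]
i.e.\ the sum \emph{decreases} (slightly). Your one-step inequality $\Sigma_{k-1}\geq \Sigma_k + p_k^{(k)} - R_k$, which claims the sum nearly \emph{doubles} over the reduction, is not what the coupling yields, and I see no variant of the coupling that could yield it: there is no mechanism by which merging a direction into one neighbour also enhances the other $k-2$ directions. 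Therefore the telescoping $\Sigma_2 \geq 2\Sigma_d - \cdots$ has no basis. Starting from $\Sigma_d = 1/2+\delta < 1$ and applying the actual coupling repeatedly, the sum never rises above $1/2+\delta$, so you cannot reach Kesten's planar threshold $\Sigma_2>1$. Reducing all the way to $\Z^2$ is a dead end.

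The fix, which is the paper's route, is to stop the reduction at an intermediate dimension $m$ chosen in terms of $\delta$ (roughly $m\asymp 1/\delta$), not at $m=2$. By Hara--Slade, $p_c(\Z^m) = \tfrac{1}{2m}+O(1/m^2)$, so the total weight needed to dominate a supercritical homogeneous model on $\Z^m$ is $m\cdot p_c(\Z^m)\approx 1/2$ — exactly the Galton--Watson scale — rather than $1$. One then partitions $\{1,\dots,d\}$ into $m$ blocks whose $q$-sums each exceed $q_c(\Z^m)$ and applies the coupling blockwise (Lemma~\ref{lemma}). The slack $\delta$ is spent in two places: covering the $O(1/m^2)$ correction in $p_c(\Z^m)$, and covering the per-block rounding loss of at most $q_{\max}$ incurred when a greedy block first overshoots $q_c(\Z^m)$. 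Both are controlled precisely when $q_{\max}\lesssim \delta^2$, which is where condition~{\bf C2} enters. This rounding loss — not quadratic ``detour errors'' — is the true obstacle that calibrates the exponent $2$ in {\bf C2}.
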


\smallskip

\begin{remark}\label{r1} At first sight, Condition~{\bf C2} may seem counter-intuitive. To see that some regularity is needed, one can take $p_3 = \cdots = p_{d}=0$ and end up with inhomogeneous non-oriented bond percolation on $\Z^2$. In this case, Kesten (see page 54 in \cite{K2}) proved that the critical curve is $p_1+p_2=1$, hence percolation cannot occur for any $\delta<1/2$.

\end{remark}

\begin{remark}
Condition {\bf C2} may not be optimal, but certainly some condition on the maximal value of the parameters is needed. For instance, let $p={1}/{2d}+{1}/{4d^2}$. We know that (see Formula \eqref{eq: cotapc} below) there exists $d_0$ large enough such that $p<p_c(d)$ for every $d\geq d_0$. In this case, considering  homogeneous percolation on $\Z^d$, $p_1 = \cdots = p_d = p$, we have $\delta = \delta(p, \dots, p) = dp - 1/2 = 1/4d$. Thus, given a constant $C'>0$ and $\alpha >0$, for $d$ sufficiently large, we have $p < C'\delta^{1-\alpha}$ without percolation. Therefore, Condition {\bf C2} could not be replaced by $\max_ip_i< C'\delta^{1-\alpha}$, for any constant $C'>0$ and any $\alpha >0$.
\end{remark}

\begin{remark}
An analogous result was proved in \cite{GPS} for inhomogeneous {\it oriented} Bernoulli bond  percolation. In that case, a martingale approach was used to  weaken Condition \textbf{C2} to $\max_{1 \leq i \leq d} p_i \leq C'\delta$, for some constant $C'$. In fact, if we apply the coupling approach presented in the proof of Theorem \ref{theo:anisotropic} to the oriented case, we  obtain a bound $C\delta^{3/2}$ in Condition \textbf{C2}, which would be worse than the aforementioned result. We also observe that the methods in \cite{GPS} are valid only for $d\geq 4$. However, since the constant $C$ from our present Theorem \ref{theo:anisotropic} is not explicit, there is no gain in extending our result for the oriented case in dimensions $d=2,3$.
\end{remark}

\begin{remark}
The condition $p_1 + \dots + p_d > 1/2$ states that the expected number of open edges incident to each vertex is greater than one, which is analogous to the sufficient condition for the inhomogeneous Galton-Watson process on the $(2d)-$ary tree to survive with positive probability. Observe that whenever $\delta(p_1,\dots,p_d)\leq 0$, that is $p_1 + \cdots + p_d \leq 1/2$, by comparison with a subcritical or critical Galton-Watson process, we have $\theta_d(p_1,\dots,p_d)=0$. 
\end{remark}

\begin{remark}
Theorem~\ref{theo:anisotropic} shows that the asymptotic behaviour of the critical hypersurface is, in some sense,  close to the inhomogeneous Galton-Watson process on the $(2d)-$ary tree. In fact, given $\varepsilon > 0$, if $p_1 + \cdots + p_d > 1/2 + \varepsilon$ and $\max_i p_i \leq C\varepsilon^2$, percolation occurs. Observe that as $\varepsilon$ goes to zero it can only be satisfied for sufficiently large dimensions but, since the value of the constant $C$ is unknown,  Theorem~\ref{theo:anisotropic} does not give an explicit lower bound on the dimension $d$. 
\end{remark}

 The next theorem states that there is a way to get rid of any regularity conditions. More precisely, for values of $\delta$ greater than $ 3\log2 - 1/2$, Theorem \ref{theo:anisotropic} applies without  Condition~\textbf{C2}.

\smallskip

\begin{theorem}\label{prop:3log2}
Consider inhomogeneous non-oriented Bernoulli bond percolation on $\Z^d$ with parameters $p_1,\dots, p_d \in [0,1)$. For any $d \geq 2$, if  
\begin{equation} \label{eq:par3log2}
p_1 + \cdots + p_d > 3 \log 2,
\end{equation}
then $\theta_d(p_1, \dots, p_d) > 0$.
\end{theorem}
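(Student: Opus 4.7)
The plan is to derive Theorem~\ref{prop:3log2} from Theorem~\ref{theo:anisotropic} by means of the dynamical coupling of Section~\ref{sec:couplings}, so as to eliminate the regularity Condition~\textbf{C2}. The larger threshold $3\log 2$ (compared to $1/2$ in Condition~\textbf{C1}) will be precisely the slack to be spent on reshaping the parameters so that \textbf{C2} becomes automatic.

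Concretely, given $(p_1,\dots,p_d)$ with $\sum_i p_i > 3\log 2$, I would first fix a small $\varepsilon>0$ and, whenever some $p_i$ exceeds $\varepsilon$, use the coupling of Section~\ref{sec:couplings} to replace that direction by several new directions of parameter at most $\varepsilon$. Iterating, I would obtain a new anisotropic model on $\Z^{D}$ for some $D\geq d$, with parameters $(\tilde p_1,\dots,\tilde p_D)\in [0,\varepsilon]^{D}$, whose percolation implies percolation of the original model via the coupling. The two points left to verify are then: (i) the new sum $\sum_j \tilde p_j$ still exceeds $1/2$ by a definite margin $\delta_0>0$; and (ii) $\varepsilon\leq C\delta_0^{2}$, so that both Conditions~\textbf{C1} and~\textbf{C2} hold for $(\tilde p_j)$ and Theorem~\ref{theo:anisotropic} can be applied on $\Z^{D}$. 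Point (ii) is purely bookkeeping once (i) is in hand, since $\varepsilon$ can be chosen freely at the start.

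The main technical obstacle—and where the specific constant $3\log 2$ becomes decisive—will be step (i), controlling how the parameter sum transforms under the coupling. By Remark~\ref{r1}, this transformation cannot preserve (much less amplify) the sum in full generality, for otherwise one would recover $\sum p_i > 1/2$ as a universal criterion for percolation, in contradiction with Kesten's two-dimensional result. The hypothesis $\sum p_i > 3\log 2$ must therefore be calibrated to absorb the actual loss imposed by the coupling of Section~\ref{sec:couplings}, leaving a positive margin $\delta_0$ on the transformed sum. If, as one may expect, the coupling realises a substitution of the ``splitting'' type $(1-\tilde p)^{k}\geq 1-p$, then $k\tilde p\geq p$ by Bernoulli, and the delicate task is to propagate this estimate through the iteration while keeping track of the compounded overhead; this is precisely what the constant $3\log 2$ should be designed to cover. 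Once that estimate is available, the conclusion follows by plugging $(\tilde p_1,\dots,\tilde p_D)$ into Theorem~\ref{theo:anisotropic} and transporting the resulting infinite cluster back to $\Z^d$ through the coupling.
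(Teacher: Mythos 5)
Your plan has the coupling pointing in the wrong direction, and this is fatal to the argument. Proposition~\ref{prop:coupling} and Lemma~\ref{lemma} give
$\theta_{d}(p_1,\dots,p_d)\geq\theta_{m}(\tilde p_1,\dots,\tilde p_m)$
for $d\geq m$, where the $\tilde p_j$ on the \emph{lower}-dimensional lattice are obtained by \emph{merging} (via $1-\tilde p_j=\prod_{i\in\mathcal D_j}(1-p_i)$) the parameters of the \emph{higher}-dimensional lattice. In words: percolation with merged parameters on $\Z^m$ implies percolation with split parameters on $\Z^d$, because splitting an edge into several independent edges of compatible parameters can only help the cluster grow. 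Your proposal does the opposite: you split $(p_1,\dots,p_d)$ into $(\tilde p_1,\dots,\tilde p_D)$ with $D\geq d$, prove percolation on $\Z^D$ via Theorem~\ref{theo:anisotropic}, and then want to ``transport the resulting infinite cluster back to $\Z^d$.'' But the coupling yields $\theta_D(\tilde p_1,\dots,\tilde p_D)\geq\theta_d(p_1,\dots,p_d)$, so knowing the left side is positive says nothing about the right side. There is no way around this within the paper's coupling: the inequality is a genuine one-way comparison.

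A secondary misconception: your step~(i), worrying that the sum ``cannot preserve (much less amplify)'' itself under splitting, is backwards. If $(1-\tilde p)^k=1-p$ then $k\tilde p>p$ by concavity of $\log$, and $\sum_j\tilde p_j\to\sum_i q(p_i)=\sum_i(-\log(1-p_i))>\sum_i p_i$ as the splitting is refined. The real tension in the problem, and in Remark~\ref{r1}, is that \emph{merging} parameters (which is the direction the coupling actually permits you to exploit) is also where the hypothesis must be strong enough; and here the constant $3\log2$ has a transparent role. The paper's proof of Theorem~\ref{prop:3log2} does not invoke Theorem~\ref{theo:anisotropic} at all (in fact the dependency is reversed: Theorem~\ref{prop:3log2} is used to reduce the proof of Theorem~\ref{theo:anisotropic} to small $\delta$). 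Instead it applies Lemma~\ref{lemma} with $m=2$: since $q_i\geq p_i$ we get $\sum q_i>3\log 2$, and using $p_c(\Z^2)=1/2$, i.e.\ $q(1/2)=\log 2$, one greedily splits $\{1,\dots,d\}$ into two blocks whose $q$-sums each exceed $\log 2$ (the worst case loses at most one extra $\log 2$ to overshoot, which is why $3\log 2=2\log2+\log2$ is enough), with a short separate case when some $p_i\geq 1/2$. This compares the $d$-dimensional model downward to a supercritical two-dimensional model, which is the valid direction of the coupling.
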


\smallskip

\begin{remark}
Recall that for $d=2$, the critical curve is $p_1+p_2=1$ (see Remark \ref{r1}) and simulations in low dimensions suggest that the critical hypersurface is convex. If the latter were the case for all dimensions, Theorem  \ref{prop:3log2} could be stated with constant $1$ instead of $3\log 2$. It would be nice to have such a result. 
\end{remark}

\section{The Dynamical Couplings}\label{sec:couplings}

The first step in proving Theorem \ref{theo:anisotropic} is to construct a monotonic coupling between inhomogeneous percolation on $\Z^d$ with parameters $(p_1, \dots, p_{d-1},\tilde{p_d}$), where $\tilde{p_d}=1-(1-p_d)(1-p_{d+1})$ and  inhomogeneous percolation on $\Z^{d+1}$ with parameters $(p_1, \dots, p_{d-1},p_d, p_{d+1} )$. Monotonic couplings between different percolation processes have been used before (see for instance \cite{GS}).

\begin{proposition} \label{prop:coupling}
Consider inhomogeneous non-oriented Bernoulli bond percolation on $\Z^d$ and on $\Z^{d+1}$. Let $p_1, \dots, p_{d+1} \in [0,1)$ and let $\Tilde{p}_d \in [0,1)$ be such that $$(1-\Tilde{p}_d) = (1-p_d)(1-p_{d+1}).$$ 
Then
    $\theta_{d+1}(p_1, \dots,p_d,  p_{d+1})  \geq \theta_{d}(p_1, \dots, p_{d-1}, \Tilde{p}_{d}) .$
\end{proposition}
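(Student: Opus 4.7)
The plan is to construct, by a dynamical exploration, a joint realization of $\omega^d$ (Bernoulli percolation on $\Z^d$ with parameters $(p_1,\ldots,p_{d-1},\tilde p_d)$) and $\omega^{d+1}$ (on $\Z^{d+1}$ with parameters $(p_1,\ldots,p_d,p_{d+1})$) together with an injection $\phi\colon\mathcal{C}^d_0(\omega^d)\hookrightarrow\mathcal{C}^{d+1}_0(\omega^{d+1})$. Once such a $\phi$ is in hand, one has $|\mathcal{C}^d_0|\le |\mathcal{C}^{d+1}_0|$ almost surely, which yields the proposition.

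The joint construction will go as follows. First sample independent uniforms $\{U_g\}_{g\in E(\Z^{d+1})}$ to define $\omega^{d+1}$. Then perform a BFS of the cluster of the origin in $\Z^d$, initialized with $\phi(0)=0$. When processing a boundary edge $\langle y,y+e_i\rangle$: for $i<d$, reveal the single $\Z^{d+1}$-edge $g=\langle \phi(y),\phi(y)+e_i\rangle$ and declare the $\Z^d$-edge open iff $g$ is open (probability $p_i$), setting $\phi(y+e_i):=\phi(y)+e_i$ when it is. For $i=d$, reveal the pair $g_1=\langle\phi(y),\phi(y)+e_d\rangle$ and $g_2=\langle\phi(y),\phi(y)+e_{d+1}\rangle$, and declare the $\Z^d$-edge open iff $g_1$ or $g_2$ is open, which happens with probability $\tilde p_d=1-(1-p_d)(1-p_{d+1})$; in that case set $\phi(y+e_d):=\phi(y)+e_d$ if $g_1$ is open and $\phi(y+e_d):=\phi(y)+e_{d+1}$ otherwise.

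The structural observation that makes the whole scheme work is that $\phi$ is automatically injective. Defining $\psi\colon\Z^{d+1}\to\Z^d$ by $\psi(v_1,\ldots,v_{d+1})=(v_1,\ldots,v_{d-1},v_d+v_{d+1})$, a direct induction on the BFS shows $\psi\circ\phi=\mathrm{id}$ on the explored set, because each of the three extension types (by $e_i$ for $i<d$, by $e_d$, or by $e_{d+1}$) is consistent with $\psi$. Injectivity of $\phi$ then forces the $\Z^{d+1}$-edges queried for distinct $\Z^d$-edges to be pairwise disjoint, so the $\Z^d$-edges revealed by the BFS are independent with the required marginals and the cluster explored has the law of $\mathcal{C}^d_0$ under Bernoulli percolation on $\Z^d$ with parameters $(p_1,\ldots,p_{d-1},\tilde p_d)$. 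By construction each tree edge realizes an open $\Z^{d+1}$-path between $\phi(\mathrm{parent})$ and $\phi(\mathrm{child})$, so $\phi(\mathcal{C}^d_0)\subseteq \mathcal{C}^{d+1}_0$, and together with injectivity this produces the required bound on cluster sizes.

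The main technical obstacle I anticipate is verifying the disjointness of the queried $\Z^{d+1}$-edges across the full exploration, particularly for BFS cross-edges and for undirected $\Z^{d+1}$-edges that could in principle be approached from either of their two endpoints. However, $\psi\circ\phi=\mathrm{id}$ forces any such would-be coincidence to involve the same underlying $\Z^d$-edge, so the potentially overlapping reveal is in fact a single query; the case analysis, while requiring care over all direction/extension combinations, is thus manageable.
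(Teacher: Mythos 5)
Your proposal is correct and follows essentially the same route as the paper: a dynamical exploration of the $\Z^d$-cluster (the paper calls it a susceptible--infected process), a vertex-map $\phi$ (the paper's $x$) into the $\Z^{d+1}$-cluster, and injectivity of $\phi$ proved by exhibiting a left inverse $\psi(v)=(v_1,\ldots,v_{d-1},v_d+v_{d+1})$, which is precisely the paper's two-line claim that $x(w)_i=w_i$ for $i<d$ and $x(w)_d+x(w)_{d+1}=w_d$. If anything, you are slightly more explicit than the paper about why the queried $\Z^{d+1}$-edges are pairwise distinct (so the revealed $\Z^d$-edges are genuinely independent with the right marginals), which is a point the paper passes over in silence.
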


Let us start with a description of the proof.
We will construct a dynamical coupling between the percolation process on $\Z^{d+1}$ with parameters $p_1, \dots, p_{d+1}$  and an infection process over $\Z^d$.  We will do it in such a way that the law of infected sites in $\Z^d$ is the same as the law of the open cluster of the origin for anisotropic percolation on $\Z^d$ with parameters $p_1, \dots, p_{d-1},\tilde{p}_d$ and also that, if the infection process survives, the open cluster of the origin of the process in $\Z^{d+1}$ must be infinite. 

To avoid any ambiguity, for each $u \in \{e_1, \dots, e_d\} \in \Z^d$ let $\tilde{u} \in \Z^{d+1}$ denote the same vector $u$ embedded in $\Z^{d+1}$, that is, the vector in $\Z^{d+1}$ where all the first $d$ coordinates are the same as those of $u \in \Z^d$ and the $(d + 1)-$th coordinate is zero.

Before we proceed with a formal proof, we give some words of explanation. The coupling will be  based on a susceptible-infected strategy described as follows. 
We declare  the origin of $\Z^d$ to be the \textit{initial} infected component. Next, at each time-step, we possibly grow the infected component through an available edge to be explored. Precise definitions will be given throughout the proof. On the steps in which we have a new vertex $z \in \Z^d$ added to the infected component, we associate the new vertex $z$ to a vertex $x(z)$ in the open cluster of the origin in $\Z^{d+1}$. 
The function $x$ will be  defined by induction during the proof of Proposition~\ref{prop:coupling} . More precisely, consider a time-step $n$ and consider a vertex $v$ in the infected component of $\Z^d$ and a neighbor $z=v+u$  (where $u \in \{\pm e_1, \dots, \pm e_d\}$) that is not in the infected component at time $n$. Since the vertex $v$ is already in the infected component in $\Z^d$, it was previously associated with some vertex $x(v)$ in the open cluster of the origin in $\Z^{d+1}$. If $\langle x(v), x(v)+\tilde{u} \rangle$ is open, we declare $z$ to be infected and write $x(z) = x(v) + \tilde{u}$. If it happens that $u \in \{\pm e_{d}\}$ and $\langle x(v), x(v)+\tilde{u} \rangle$ is closed, we give a second chance to declare $z$ infected. For $u=\pm e_d$, in this second chance, we declare $z$ to be infected in $\Z^d$ if $\langle x(v), x(v)\pm e_{d+1} \rangle$ is open in $\Z^{d+1}$. In the last case we set $x(z) = x(v)+e_{d+1}$, or $x(z) = x(v) - e_{d+1}$, depending on whether $u= e_d$ or $u=-e_d$.

\begin{proof} 
First, we give a precise description of the susceptible-infected type algorithm. 
We will inductively construct a sequence of sets $(I_n, x_n(I_n), A_n, B_n)_{n \geq 0}$. In this sequence, $I_n$ represents the \textit{infected vertices} in $\Z^d$ up to time $n$ and $x_n(I_n) = \{x_n(v) : v \in I_n\}$ represents the vertices in $\Z^{d+1}$ associated with the infected vertices up to time $n$. As will be clear after the conclusion of the description, at each step of the algorithm we will, if available, explore an edge of $E(\Z^{d})$. In the sequence of sets mentioned above, $B_n  \subset E(\Z^d)$ will represent the \textit{explored edges} up to time $n$ (i.e.~the edges already explored during one step up to time $n$). Finally, $A_n \subset E(\Z^d)$ represents the \textit{available edges} to be explored at time $n$, defined  as follows. Given $I_n$ and $B_n$, let
\begin{equation*}
    A_{n} := \{ \langle v, u \rangle : v \in I_{n} \: \mbox{and} \: u \notin I_{n} \} \cap B_{n}^c.
\end{equation*}
In words, $A_n$ is the set of edges not explored by time $n$ and that are composed of an infected vertex and a non-infected vertex at time $n$.

The dynamics of the process are as follows.
We start with the following settings at time 0:
\begin{itemize}
    \item $I_0 = \{0\} \subset \Z^d$,
    \item $x_0(0) = 0 \in \Z^{d+1}$,
    \item $B_0 = \emptyset \subset E(\Z^d)$ , 
    \item $A_0 := \{ \langle v, u \rangle : v \in I_{0} \: \mbox{and} \: u \notin I_{0} \} \cap B_{0}^c \:\: = \{ \langle 0, \pm e_1\rangle, \dots, \langle 0,\pm e_d\rangle \} \subset E(\Z^d)$.
\end{itemize}

This means that, at time $n=0$, only the vertex $0$ is infected, and it can potentially infect any of its neighbours in the following step, so all edges with the origin as an end-vertex are available.  After that, in each step an infected vertex may or may not propagate the infection to a non-infected vertex through an available edge (if the latter exists). We remark that the occurrence of such propagation will be determined according to the status of some edges of the percolation process on $\Z^{d+1}$. The precise description of such edges and status to be considered will be given in the remainder of the proof.

Given $n \geq 0$, suppose that $I_n$, $x_n:I_n \to \Z^{d+1}$, $B_n$ and $A_n$ are already defined. In case there is no available edge, i.e. $A_n=\emptyset$, the process stops and we set, for all $k\geq 1$,
\begin{itemize}
    \item $ I_{n+k} = I_n,$
    \item $x_{n+k}: I_{n+k} \to \Z^{d+1},$ with $x_{n+k}(v) = x_{n}(v), ~\forall v \in I_{n+k},$
    \item $A_{n+k} = A_n.$
     \item $B_{n+k} = B_n,$
\end{itemize} 
Otherwise, if there exists at least one available edge, i.e. $A_n\not=\emptyset$, let $g_n$ be the earliest edge in $A_n$ according to a fixed ordering. We will now \textit{explore} $g_n$. The precise meaning of the expression `explore' is to execute the next step of the algorithm, in which we analyze the state of edges in the percolation model on $\Z^{d+1}$.  Since $g_n \in A_n$, it must be of the form $\langle v_n, v_n+u_n \rangle$, where $v_n \in I_n$, $u_n \in \{ \pm e_1, \dots, \pm e_{d}\} \subset \Z^d$ and $v_n+u_n \notin I_n$. 
Thus we have two options: either $u_n \in \{ \pm e_1, \dots, \pm e_{d-1}\}$, or $u_n \in \{\pm e_d\}$. At this point, we draw the reader's attention to the notation previously introduced; in what follows, $u_n$ denotes a unitary vector of $\Z^d$, while $\tilde{u}_n$ denotes the same vector embedded in $\Z^{d+1}$, that is, the vector in $\Z^{d+1}$ where all the first $d$ coordinates are the same as those of $u_n \in \Z^d$ and the $(d+1)$-th coordinate is zero. 

Let us treat the first case above. Suppose $u_n \in \{ \pm e_1, \dots, \pm e_{d-1}\}$. Then $v_n$ \textit{infects} $v_n+u_n$ in $\Z^d$ if $\langle x_n(v_n), x_n(v_n)+\tilde{u}_n \rangle$ is open in $\Z^{d+1}$. More precisely, if $\langle x_n(v_n), x_n(v_n)+\tilde{u}_n \rangle$ is open in $\Z^{d+1}$ then we set
\[ I_{n+1} := I_n \cup \{v_n+u_n\},\]
and define $x_{n+1}: I_{n+1} \to \Z^{d+1}$ as
\begin{equation}\label{eq: xn+1a}
    x_{n+1}(v_n+u_n) := x_n(v_n) + \tilde{u}_n \text{ and } ~x_{n+1}(v) = x_n(v) ~ \forall v \in I_n.
\end{equation} 
Otherwise, if $\langle x_n(v_n), x_n(v_n)+\tilde{u}_n \rangle$ is closed in $\Z^{d+1}$, we set $I_{n+1}:= I_n$ and $x_{n+1}: I_{n+1} \to \Z^{d+1}$ as $x_{n+1}(v) = x_{n}(v), ~\forall v \in I_{n+1}$.

In case  $u_n = e_d$, $v_n$ has two chances of infecting $v_n+u_n$, that is,  either  $\langle x_n(v_n), x_n(v_n)+\tilde{u}_n \rangle$ is open in $\Z^{d+1}$ and we set
\begin{equation}\label{eq: x2}
    x_{n+1}(v_n+u_n) := x_n(v_n) + \tilde{u}_n,
\end{equation} 
or $\langle x_n(v_n), x_n(v_n)+\tilde{u} \rangle$ is closed in $\Z^{d+1}$ and $\langle x_n(v_n), x_n(v_n)+e_{d+1} \rangle$ is open in $\Z^{d+1}$, and we write
\begin{equation}\label{eq: x3}
    x_{n+1}(v_n+u_n) := x_n(v_n) + e_{d+1}.
\end{equation}
In both cases, we set
\[ I_{n+1}:= I_n \cup \{v_n+u_n\}\] and conclude the definition of $x_{n+1} : I_{n+1} \to \Z^{d+1}$ with $x_{n+1}(v) = x_{n}(v), ~\forall v \in I_{n}$.
On the other hand, if $\langle x_n(v_n), x_n(v_n)+\tilde{u}_n \rangle$ and $\langle x_n(v_n), x_n(v_n)+e_{d+1} \rangle$ are both closed in $\Z^{d+1}$, we write
$I_{n+1}:= I_n$ and set $x_{n+1}: I_{n+1} \to \Z^{d+1}$ by $x_{n+1}(v) = x_{n}(v), ~\forall v \in I_{n+1}$.

We proceed analogously when $u_n = -e_d$. 

Now that we have \textit{explored} $g_n$, we write 
\[ B_{n+1} := B_n \cup \{ g_n \}.\]
To conclude our induction step, we set
\begin{equation*}
  A_{n+1} := \{ \langle v, u \rangle : v \in I_{n+1} \: \mbox{and} \: u \notin I_{n+1} \} \cap B_{n+1}^c.
\end{equation*}

With the induction step of the algorithm described above, the sequence $(I_n, x_n, A_n,B_n)_{n \geq 0}$ is now well defined. Observe that, by construction, if $v \in I_k \cap I_{m}$ for some $k \neq m$, it yields that $x_k(v) = x_m(v)$. In that way, we can define the following function $x: \cup_j I_j \to \Z^{d+1}$, where $x(v) = x_j(v), ~\forall v \in I_j, ~j \geq 0$.

Observe that the function $x: \cup_j I_j \to \Z^{d+1}$ is injective. 
In fact, given $n \geq 0$, let $w= (w_1, \dots, w_d) \in I_n$, we claim that it satisfies
\begin{itemize}
    \item $x(w)_i=w_i$ for $i= 1, \dots, d-1$,
    \item $x(w)_d+x(w)_{d+1} = w_d$.
\end{itemize}

It can be proved by induction. Since $x_0(0) = 0$, the claim is true for $n = 0$. Given $n \geq 0$, assume that the claim is true for all $w \in I_n$. In the case where $I_{n+1} = I_n$, the claim is then true for all $w \in I_{n+1}$. Consider now the case where $I_{n+1} = I_n \cup \{v_n+u_n\}$ (here we are using the same notation of the algorithm described above). To conclude the proof of the claim, it is sufficient to prove that the claim is true for the new vertex $v_n+u_n$. Observe that, according to the possible definitions for $x(v_n+u_n)$ given in \eqref{eq: xn+1a}, \eqref{eq: x2} and \eqref{eq: x3}, we have either $x(v_n + u_n) = x(v_n) + \tilde{u}_n$ where $u_n = \pm e_i$ for some $i\in \{1, \dots, d\}$, or $x(v_n+u_n) = x(v_n) \pm e_{d+1}$ with $u_n = \pm e_d$. By induction hypothesis, the claim is true for $v_n$. In the first case mentioned above, we add or subtract one in the $i-$th coordinate of the vectors $v_n \in \Z^d$ and $x(v_n) \in \Z^{d+1}$, so the property of the claim extends to $v_n + u_n$. In the second case, we  add or subtract one in the $d-$th coordinate of the vector of $v_n \in \Z^d$ and in the $(d+1)-$th coordinate of the vector $x(v_n) \in \Z^{d+1}$; according to the second property of the claim, it also extends to $v_n+u_n$.

Note that the image of $x$ is contained in the  open cluster of the origin of $\Z^{d+1}$. Since $x$ is injective, if $|\cup_n I_n|$ is infinite, then the open cluster of the origin of $\Z^{d+1}$ must be infinite.  Also, note that $\cup_{n} I_n$ has the same  law as  $\mathcal{C}^{d}_0$, where $\mathcal{C}^{d}_0$ is the  open cluster of the percolation process on  $\Z^d$ with parameters $p_1, \dots, p_{d-1}, 1 - (1-p_d)(1-p_{d+1})$.  Therefore,
\begin{eqnarray*}
 \theta_d(p_1, \dots, p_{d-1}, \tilde{p}_d) &=&  \theta_d\big(p_1, \dots, p_{d-1}, 1 - (1-p_d)(1-p_{d+1})\big)\\  
 &\leq& \theta_{d+1} (p_1, \dots, p_{d+1}),
\end{eqnarray*}
and the proof of Proposition \ref{theo:anisotropic} is complete.
\end{proof}

\section{Proof of Theorems \ref{theo:anisotropic} and \ref{prop:3log2}
}\label{sec:proofs}

The proof of Theorem \ref{theo:anisotropic} will be based on successive applications of Proposition \ref{prop:coupling}.

In this section, we will use the increasing function $q:[0,1) \to [0, \infty)$ defined by 
\begin{equation}\label{eq: q}
    q(p)=-\log (1-p).
\end{equation} 
For each $i=1, \dots, d$, we denote $q(p_i) = q_i$, $\tilde{q}_d = q(\tilde{p}_d)$. 
With this notation, the condition of Proposition \ref{prop:coupling},
\begin{equation*}
        (1-p_d)(1-p_{d+1}) = 1-\tilde{p}_d, 
\end{equation*}
simplifies to
\begin{equation}\label{eq: soma q}
         q_d+q_{d+1} = \tilde{q}_d.
\end{equation}

Given $d \geq m\geq 2$ positive integers, we say that  $({\cal D}_1, \dots, {\cal D}_m)$  is a partition of $\{1, \dots, d\}$, whenever
\begin{equation}\label{eq: partition}
    \bigcup_{i=1}^m {\cal D}_i = \{1, \dots, d\} \: \: \mbox{and}  \: \: {\cal D}_i \cap {\cal D}_j = \emptyset, \: \:\forall i \neq j.
\end{equation}

Successive applications of Proposition \ref{prop:coupling}, together with the notation introduced above, give the following corollary of Proposition \ref{prop:coupling}, that we state as a lemma since it will be used in the proofs of Theorem~\ref{theo:anisotropic} and Theorem~\ref{prop:3log2}.

\begin{lemma}\label{lemma}
Consider inhomogeneous non-oriented Bernoulli bond percolation on $\Z^d$ and on $\Z^m$ with $d \geq m \geq 2$. Let $p_1, \dots, p_{d} \in [0,1)$ and let $\Tilde{p}_1, \dots, \Tilde{p}_m \in [0,1)$ be such that
there exists a partition $({\cal D}_1, \dots, {\cal D}_m)$ of $\{1, \dots, d\}$ where $$\sum_{i \in {\cal D}_j} q_i = q(\Tilde{p}_j), ~j =1, \dots, m.$$
Then
    $\theta_{d}(p_1, \dots, p_d) \geq \theta_{m}(\Tilde{p}_1, \dots, \Tilde{p}_{m}).$
\end{lemma}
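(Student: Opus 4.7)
The plan is to prove the lemma by induction on $d-m$, using Proposition~\ref{prop:coupling} at each step to merge two coordinates into one. The base case $d=m$ is almost immediate: the partition condition forces each $\mathcal{D}_j$ to be a singleton $\{i_j\}$, and since $q$ is a bijection the hypothesis $q_{i_j}=q(\tilde{p}_j)$ is equivalent to $p_{i_j}=\tilde{p}_j$. The percolation model on $\Z^d$ is invariant under permutations of the canonical basis, so $\theta_d(p_1,\dots,p_d)=\theta_d(\tilde{p}_1,\dots,\tilde{p}_d)$.

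For the inductive step, assume the statement holds for differences smaller than $d-m$ and consider data with $d>m$. By pigeonhole, some $\mathcal{D}_{j^\star}$ contains at least two distinct indices $k,\ell$. Define $\hat{p}\in[0,1)$ by $(1-\hat{p})=(1-p_k)(1-p_\ell)$, equivalently $q(\hat{p})=q_k+q_\ell$. Invoking Proposition~\ref{prop:coupling}, together with the invariance of the model under permutations of the canonical basis (which lets me apply the proposition to any pair of coordinate directions rather than only the last two), I would obtain
\begin{equation*}
\theta_d(p_1,\dots,p_d)\ \geq\ \theta_{d-1}(p'_1,\dots,p'_{d-1}),
\end{equation*}
where $(p'_1,\dots,p'_{d-1})$ is obtained from $(p_1,\dots,p_d)$ by removing $p_k,p_\ell$ and inserting $\hat{p}$ in their place.

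Next, I would build a new partition $(\mathcal{D}'_1,\dots,\mathcal{D}'_m)$ of $\{1,\dots,d-1\}$ by replacing the pair $\{k,\ell\}\subset\mathcal{D}_{j^\star}$ with the single index corresponding to $\hat{p}$ and keeping every other class unchanged. Additivity of $q$ gives
\begin{equation*}
\sum_{i\in\mathcal{D}'_{j^\star}} q(p'_i)\ =\ \sum_{i\in\mathcal{D}_{j^\star}\setminus\{k,\ell\}} q_i\ +\ q(\hat{p})\ =\ \sum_{i\in\mathcal{D}_{j^\star}} q_i\ =\ q(\tilde{p}_{j^\star}),
\end{equation*}
so the hypothesis of the lemma is preserved with parameters $(p'_1,\dots,p'_{d-1})$, target $(\tilde{p}_1,\dots,\tilde{p}_m)$, and partition $(\mathcal{D}'_1,\dots,\mathcal{D}'_m)$. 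The inductive hypothesis then yields $\theta_{d-1}(p'_1,\dots,p'_{d-1})\geq\theta_m(\tilde{p}_1,\dots,\tilde{p}_m)$, and chaining both inequalities closes the induction.

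The only non-mechanical point is the use of the coordinate-permutation symmetry to apply Proposition~\ref{prop:coupling} to an arbitrary pair $(k,\ell)$ rather than to $(d,d+1)$; this is immediate from the lattice's invariance. All remaining work is bookkeeping of the partition after each merger.
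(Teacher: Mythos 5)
Your proof is correct and takes the same route the paper intends: the paper proves Lemma~\ref{lemma} by ``successive applications of Proposition~\ref{prop:coupling},'' and your induction on $d-m$, merging two indices from a non-singleton class at each step and invoking permutation invariance to put them in the last two coordinate slots, is precisely a careful formalization of that. The only cosmetic caveat is that your base case implicitly assumes all $\mathcal{D}_j$ are nonempty; this is harmless (an empty $\mathcal{D}_j$ forces $\tilde{p}_j=0$ and can be eliminated by dropping a coordinate), and is in any case the regime in which the lemma is applied.
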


Now we prove Theorem \ref{prop:3log2}.

\begin{proof}[Proof of Theorem~\ref{prop:3log2}]
First of all, we recall that $p_c(\Z^{2}) = 1/2$ and $ q(1/2) = \log 2 $. The idea is to apply Lemma~\ref{lemma} in order to compare a $d-$dimensional system with a supercritical $2-$dimensional system.
We will consider two cases: $p_i < 1/2$ for all $i =1, \dots, d$ and $p_i \geq 1/2$ for some $i=1, \dots, d$. Observe that, by hypothesis $p_1 + \cdots + p_d \geq 3\log2$, so the first case can only occur for $d > 4$.

If $p_i < 1/2$, for all $i \geq 1$, then $q_i < \log 2$ for all $i \geq 1$. Since $p_1+ \cdots + p_d \geq 3 \log 2$ and $q_i \geq p_i$, it follows that 
\begin{equation} \label{eq:sumq-log2}
    q_1+ \cdots + q_d \geq 3 \log 2. 
\end{equation}
Let
\begin{equation*}
    m := \min \Big \{j : \sum_{i=1}^{j} p_i > \log 2 \Big\}, 
\end{equation*}

\begin{equation*}
    {\cal D}_1 = \{1, \dots, m\} \:\: \mbox{and} \:\: {\cal D}_2 = \{ m+1, \dots, d\}.
\end{equation*}

By the fact that $q_i < \log 2$, we have that $\sum_{i \in {\cal D}_1} q_i \in (\log 2, 2\log 2)$. Therefore using \eqref{eq:sumq-log2} we obtain
\begin{equation*}
    \sum_{i \in {\cal D}_1} q_i > \log 2 ~~ \mbox{and} ~~ \sum_{i \in {\cal D}_2} q_i = \big( q_1 + \cdots + q_d \big) -  \sum_{j \in {\cal D}_1} q_j > 3\log2 - 2\log2 = \log 2.
\end{equation*}
    
By an application of Lemma~\ref{lemma} we obtain that $\theta_d(p_1, \dots, p_d) > \theta_2(p,p)$, for some $p>1/2$ such that $\sum_{i \in {\cal D}_{\ell}} q_i > q(p) > \log 2$, $\ell = 1, 2$. Since $p_c(\Z^{2}) = 1/2$, the expression $\theta_2(p,p)$ is strictly positive.

In the second case, we can assume, without loss of generality, that $p_1 \geq 1/2$. From \eqref{eq:par3log2}, we have $p_2 + \cdots + p_d \geq 3\log 2 - p_1 >  3\log 2 - 1 > \log 2$, hence $q_2 + \cdots + q_d > \log 2$.

Therefore, by Lemma~\ref{lemma}, $\theta_d(p_1, \dots, p_d) > \theta_2(p_1, p)>0$ for some $p > 1/2$ such that $q_2 + \cdots + q_d > q(p) > \log 2$.
\end{proof}

Note that in the case where $p_i \geq 1/2$ for some $i=1, \dots, d$, the proof given above also works for the hypothesis $p_1 + \cdots + p_{d} \geq 1 + \log2$. Then, for $d=3$ and $d=4$, the hypothesis of Theorem ~\ref{prop:3log2} can be weakened to $p_1 + \cdots + p_{d} \geq 1 + \log2$.

By Theorem  \ref{prop:3log2}, it is enough to prove Theorem \ref{theo:anisotropic} with $0 < \delta < \lambda$, where $\lambda = 3\log 2 - 1/2$. Throughout the rest of the text, we will use the notation $q(p_c(\Z^d)) = q_c(\Z^d)$.

\begin{proof}[Proof of Theorem \ref{theo:anisotropic}]

Our strategy will be to partition $\{1,\dots,d\}$  into $m<d$ subsets. We then apply Lemma~\ref{lemma} so that inhomogeneous percolation on $\Z^d$ will dominate a supercritical  homogeneous percolation on $\Z^m$.

Consider the parameters $p_1, \dots, p_d \in [0,1)$ and $\delta = \delta(p_1 + \cdots + p_d) = p_1 + \cdots + p_d - 1/2$, such that $\delta \in (0, \lambda)$. Recall the definition of partition given in \eqref{eq: partition}: If for some $m \in \N$, there exists a partition  $({\cal D}_1, \dots, {\cal D}_m)$ of $\{1, \dots, d\}$, such that 
\begin{equation} \label{eq:goodpartition}
\sum_{i \in {\cal D}_{j}} q_i > q_c(\Z^m), \:\:\: j = 1, \dots, m, 
\end{equation}
then, by Lemma~\ref{lemma}, we obtain $\theta(p_1, \dots, p_d)>0$. Thus, it is sufficient to prove the existence of a partition $({\cal D}_1, \ldots, {\cal D}_m)$ with the property given in  \eqref{eq:goodpartition}. We start by showing a sufficient condition for the existence of such a partition, and then we show that the hypotheses of  Theorem \ref{theo:anisotropic} imply that sufficient condition.

Write $q_{\max}:=\max_{1 \leq i \leq d} q_i$ and  suppose that, for some $m<d$, 
\begin{equation} \label{eqbound:qc}
    q_1 + \cdots + q_d > (q_c(\Z^m) + q_{\max})(m-1) + q_c(\Z^m).
\end{equation}
We claim that there exists a partition with the property given by  \eqref{eq:goodpartition}. Indeed, take $i_0=0$ and, for each $\ell = 1, \dots, m-1$, let
\begin{equation}\label{eq: inf}
    i_{\ell} := \min \left\{j: \sum_{i=i_{\ell-1}+1}^j q_i > q_c(\Z^m) \right\}.
\end{equation}
Let $i_m=d$ and define
\begin{equation*}
    {\cal D}_{\ell} := \{i_{\ell-1}+1, \dots, i_{\ell}\}, \:\: \text{for each }\ell=1, \dots, m. 
\end{equation*}
By construction, the partition $({\cal D}_1, \dots, {\cal D}_m)$ has the desired property and, therefore, we just need to show that
it is well defined.
Indeed, it follows from   (\ref{eq: inf}) that, for each $\ell=1, \dots, m-1,$
\begin{equation*}
    \sum_{i=1}^{i_{\ell}} q_i \leq (q_c(\Z^m) + q_{\max}) \ell,
\end{equation*}
so, by  \eqref{eqbound:qc}, we have
\begin{equation*}
     \sum_{i=i_{\ell}+1}^{d} q_i > (q_c(\Z^m) + q_{\max}) (m - 1 - \ell) + q_c(\Z^m) \geq q_c(\Z^m),
\end{equation*}
which guarantees the existence of $j$  as needed to define $i_{\ell +1}$ as in  \eqref{eq: inf}. Therefore, $i_0, \dots, i_m$ and $({\cal D}_1, \dots, {\cal D}_m)$ are well defined.

To finish the proof, we will show that the conditions of Theorem \ref{theo:anisotropic} imply the existence of some $m$ as in   \eqref{eqbound:qc}. Since, by hypothesis,
\begin{equation*}
    q_1 + \dots + q_d >p_1+\cdots +p_d = \frac{1}{2} + \delta,
\end{equation*} 
it is sufficient to find $m$ such that 
\[ \frac{1}{2} + \delta \geq m( q_c(\Z^m) + q_{\max}),\]
that is, 
\begin{equation}\label{eq:superarcritico}
\frac{1}{2m} + \frac{\delta}{m} \geq  q_c(\Z^m) + q_{\max}.    
\end{equation}

In \cite{HS} it is shown that 
\begin{equation}\label{eq: cotapc}
p_c(\Z^d) = \frac{1}{2d} + \frac{1}{4d^2} + \frac{7}{16d^3} + O \left( \frac{1}{d^4} \right).
\end{equation}
In particular, there exists a positive constant $C_1$ such that 
\begin{equation}\label{eq: nova}
    q_c(\Z^d) \leq \frac{1}{2d} + \frac{C_1}{d^2}, ~\forall d \geq 2.
\end{equation}

Recall that we are considering $\delta \in (0,\lambda)$. Let $m_{\delta}$ be the positive integer defined as
\begin{equation*}
   m_{\delta} := \left\lceil \frac{2C_1}{\delta} \right\rceil. 
\end{equation*}
Observe that, by the definition of $m_{\delta}$, we have
$\delta \geq 2C_1/m_{\delta} $, hence~\eqref{eq: nova} yields that
\begin{equation} \label{eq: nova2}
\frac{1+ \delta}{2m_{\delta}} \geq \frac{1}{2m_{\delta}} + \frac{C_1}{m_{\delta}^2} \geq q_c(\Z^{m_{\delta}}).
\end{equation}
Also observe that there is a sufficiently large constant $C_2 := 2C_1 + \lambda > 0$ such that $m_{\delta} \leq 2C_1/\delta + 1 \leq C_2/\delta$ for all $\delta \in (0, \lambda)$. This is equivalent to 
\begin{equation}\label{eq: nova3}
    \frac{\delta}{2m_\delta} \geq C_3\delta^2, \quad \forall \delta \in (0, \lambda).
\end{equation}
where $C_3 := 1/(2C_2)$.

Summing the two inequalities in~\eqref{eq: nova2} and \eqref{eq: nova3}, for the case where $q_{\max} \leq C_3\delta^2$, we have that the desired inequality given in~\eqref{eq:superarcritico} is satisfied by taking $m = m_{\delta}$.

To conclude the proof, observe that according to \eqref{eq: q}, the ratio $r(p) := q(p)/p$ is increasing in $p$. Hence, there exists a sufficiently small constant $C > 0$ such that
\begin{equation*}
    \max_{1 \leq i \leq d} p_i \leq C\delta^2 \:\: \Longrightarrow  \:\: q_{\max} \leq C_3\delta^2, \quad \forall \delta \in (0,\lambda).
\end{equation*}
Indeed, let $C$ be such that $Cr(C\lambda^2) = C_3$. Thus we have, for each $i = 1, \dots, d$, $p_i \leq C\delta^2$ yields 
\begin{equation*}
    q_i = r(p_i)p_i < r(C\lambda^2)C\delta^2 = C_ 3\delta^2, \quad \forall \delta \in (0,\lambda).
\end{equation*}
\end{proof}

\begin{remark}
Since $C_1$ could be taken close to $1/4$ as long as we take a sufficiently high dimension, we conclude that the constant $C$ could be taken as close to $C(1/4)$ as one wishes. Unfortunately, for a given value of $C$, we do not have an estimate of the least dimension for which the theorem holds.
\end{remark}

\vspace{1cm}

{\bf Acknowledgements:} The authors thank an anonymous referee who helped improve the readability of the paper and also thank Roger Silva for valuable comments on the first version of the manuscript. 

\vspace{0.5cm}
 
{\bf Funding:} P.A.~Gomes has been supported by S{\~a}o Paulo Research Foundation (FAPESP), grant 2020/02636-3 and grant 2017/10555-0.
R.~Sanchis has been partially supported by Conselho Nacional de Desenvolvimento
Cient{\'i}fico e Tecnol{\'o}gico (CNPq), CAPES and by FAPEMIG (APQ-00868-21 and RED-00133-21).

\vspace{0.5cm}

{\bf Data Availability Statement:} Data sharing is not applicable to this article as no datasets were generated or
analysed during the current study.

\vspace{0.5cm}
{\bf Conflict of interest:} The authors have no competing interests to declare that are relevant to the content of this article.

\thebibliography{}

\bibitem{BH} Broadbent S.R., Hammersley, J.M. { Percolation processes: I.
Crystals and mazes.} {\it  Mathematical Proceedings of the Cambridge
Philosophical Society}. Vol. 53, No. 3,  629--641, (1957).

\bibitem{CLS} Couto, R.G., de Lima, B.N.B. and Sanchis, R. { Anisotropic percolation on slabs.}
{\it Markov Process. Related Fields}, 20 (1), 145--154, (2014).

\bibitem{GPS} Gomes, P.A., Pereira, A., and Sanchis, R.
{ Anisotropic oriented percolation in high dimensions.} {\it ALEA, Lat. Am. J. Probab. Math. Stat.} 17, 531--543 (2020).

\bibitem{GSS} Gomes P.A., Sanchis, R. and Silva, R.W.C. { A note on the dimensional crossover critical exponent.} {\it Lett. Math. Phys.} 110, 3427--3434 (2020).

\bibitem{G} Gordon, D.M. {Percolation in high dimensions.} {\it  Journal of London Mathematical Society.} V.2-44 (2),  373--384, (1991).

\bibitem{GS} Grimmett, G.R., Stacey, A.M.: {Critical probabilities for site and bond percolation models.} {\it Ann. Probab.}
26, 1788--1812 (1998)

\bibitem{Grim} Grimmett, G.R. {\it Percolation.} Springer-Verlag (1999).

\bibitem{HS} Hara, T., and Slade, G. {The self-avoiding-walk and percolation critical points in high dimensions.} {\it Combinatorics, Probability and Computing}. 4.3, 197--215 (1995).

\bibitem{K3} Kesten, H. {The critical probability of bond percolation
on the square lattice equals 1/2}, {\it Commun. Math. Phys.} 74, 41--59 (1980). 

\bibitem{K2} Kesten, H. {\it Percolation Theory for Mathematicians.} Birkh\"{a}user (1982).

\bibitem{K} Kesten, H. { Asymptotics in high dimensions for percolation.} {\it Disorder in physical systems}, Oxford University Press, 219--240 (1990).

\end{document}